\documentclass[11pt]{amsart}

\usepackage{amsfonts,amssymb,amsmath,amsthm,graphicx}
\usepackage{mathrsfs}

\newtheorem*{maintheorem}{Theorem}
\newtheorem{theorem}{Theorem}[section]
\newtheorem{proposition}[theorem]{Proposition}

\newtheorem{lemma}[theorem]{Lemma}
\newtheorem{corollary}[theorem]{Corollary}

\newtheorem{definition}[theorem]{Definition}

\numberwithin{equation}{section}

\newcommand{\la}{\lambda}
\newcommand{\N}{\mathbb{N}}
\newcommand{\Z}{\mathbb{Z}}

\newcommand{\R}{\mathbb{R}}
\newcommand{\C}{\mathbb{C}}

\newcommand{\diag}{\mathrm{diag}}
\newcommand{\tr}{\mathrm{tr}}
\newcommand{\ket}[1]{| #1 \rangle}
\newcommand{\bra}[1]{\langle #1 |}
\newcommand{\braket}[2]{\langle #1 | #2 \rangle}
\newcommand{\proj}[1]{\ket{#1}\bra{#1}}

\newcommand{\Sym}{\mathrm{Sym}}

\newcommand{\sgn}{\mathrm{sgn}}
\newcommand{\spa}[1]{\mathrm{span}\left\{{#1}\right\}}

\newcommand{\GLC}[1]{\ensuremath{\mathsf{GL}_{#1}(\C)}}

\newcommand{\GLR}[1]{\ensuremath{\mathsf{GL}_{#1}(\R)}}
\newcommand{\GL}{\mathsf{GL}}
\newcommand{\SL}{\mathsf{SL}}
\newcommand{\SG}{\mathsf{S}}
\newcommand{\B}{\mathsf{B}}

\newcommand{\Isot}[2]{{#1}({#2})}

\newcommand{\Specht}[1]{\mathscr{S}_{#1}}
\newcommand{\Weyl}[1]{\mathscr{V}_{#1}}

\newcommand{\kostka}[2]{\mathbf{K}_{{#1}{#2}}}

\newcommand{\per}{\mathrm{per}}
\renewcommand{\det}{\mathrm{det}}

\bibliographystyle{siam}

\title{Even partitions in plethysms}

\author{Peter B\"urgisser}
\address{Peter B\"urgisser, Institute of Mathematics, University of Paderborn, D-33098 Paderborn, Germany}
\email{pbuerg@upb.de}

\author{Matthias Christandl}
\address{Matthias Christandl, Institute for Theoretical Physics, ETH Zurich, Wolfgang-Pauli-Strasse 27, CH-8093 Zurich, Switzerland}
\email{christandl@phys.ethz.ch}

\author{Christian Ikenmeyer}
\address{Christian Ikenmeyer, Institute of Mathematics, University of Paderborn, D-33098 Paderborn, Germany}
\email{ciken@math.upb.de}

\date{December 15, 2010}

\keywords{group representation theory, plethysms, geometric complexity theory, quantum information theory}

\subjclass[2000]{20C30; 20G05}

\clubpenalty = 10000
\widowpenalty = 10000
\displaywidowpenalty = 10000

\begin{document}
\sloppy

\maketitle

\begin{abstract}
We prove that for all natural numbers $k,n,d$ with $k \leq d$ and every partition $\lambda$
of size $kn$ with at most $k$ parts there exists an irreducible $\GLC d$-representation
of highest weight $2\lambda$ in the plethysm $\Sym^k(\Sym^{2n}\C^d)$.
This gives an affirmative answer to a conjecture by Weintraub (J. Algebra, 129 (1):~103-114, 1990).
Our investigation is motivated by questions of geometric complexity theory and uses ideas
from quantum information theory.
\end{abstract}

\setcounter{tocdepth}{1}
%\tableofcontents

\section{Introduction}

\subsection{Statement of the result}
Geometric complexity theory is an approach to solve arithmetic versions of the famous
$\text{P}\neq \text{NP}$ conjecture and related questions
via geometric invariant theory~\cite{GCT1,GCT2}.
In this approach it is of interest to have conditions for the occurrence
of an irreducible $\GLC d$-representation~$\Weyl\mu$
with highest weight $\mu=(\mu_1,\ldots,\mu_d)$ in
the plethysm $\Sym^k(\Sym^{\ell}\C^d)$,
see~\cite[\S8.4]{BLMW:09} and Section~\ref{se:connGCT}.
Since $\Sym^k(\Sym^{\ell}\C^d)$ is homogeneous of degree~$k\ell$,
for the occurrence of $\Weyl \mu$,
it is necessary that $\mu$ has the size $|\mu| := \sum \mu_i = k\ell$.
%a necessary condition for $\Weyl\mu$ occurring in it
%is $|\mu| = \sum \mu_i = km$.
Further, it is easy to see that $\mu$ should have at most~$k$ parts,
i.e., $k\le d$ and $\mu_i=0$ for $i>k$.

Weintraub~\cite{wei:90} conjectured that for the occurrence of~$\Weyl\mu$
in $\Sym^k(\Sym^{\ell}\C^d)$, where $\ell$ is even,
it is sufficient that all the parts~$\mu_i$
are even. This was supported by explicit computations for small
values of the parameters.
Manivel~\cite{mani:98} proved an asymptotic version of this conjecture
using geometric methods.

In this paper we prove Weintraub's conjecture.

\begin{maintheorem}\label{th:main}%[Weintraub Conjecture]
For all $k,n,d\in \N$ with $k \leq d$ and for all partitions $\la$ of size~$kn$
with at most $k$ parts, the irreducible $\GLC d$-representation $\Weyl {2\la}$
of highest weight $2\la$ occurs in
the plethysm $\Sym^k(\Sym^{2n}\C^d)$­.
\end{maintheorem}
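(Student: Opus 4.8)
The plethysm $\Sym^k(\Sym^{2n}\C^d)$ decomposes as a $\GLC d$-representation, and by Schur–Weyl type considerations the multiplicity of $\Weyl{2\la}$ in it equals the dimension of a certain space of invariants/covariants. The natural strategy is to exhibit an explicit nonzero highest weight vector of weight $2\la$ inside $\Sym^k(\Sym^{2n}\C^d)$, or equivalently a nonzero element of the appropriate invariant space. I would realize $\Sym^{2n}\C^d$ as the space of homogeneous degree-$2n$ polynomials (or, dually, symmetric tensors) and $\Sym^k$ of it as symmetric functions of $k$ such forms; a point of this space is determined by $k$ vectors $v_1,\dots,v_k \in \C^d$ via the "totally decomposable" locus, sending $(v_1,\dots,v_k)$ to $\Sym^k$ applied to $(v_1^{2n},\dots,v_k^{2n})$, and highest weight vectors of weight $2\la$ restrict to this locus as $\SL$-covariants of the $k$-tuple $(v_1,\dots,v_k)$. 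This reduces the problem to producing, for every partition $\la \vdash kn$ with at most $k$ parts, a nonzero $\SL_k$-covariant (after the standard reduction $d \geq k$ lets us work with $k$ vectors spanning $\C^k$) of the correct weight $2\la$ that is additionally $\Sym_k$-symmetric and lies in the span of $\{v_i^{2n}\}$-products.

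**Bringing in quantum information theory.** The abstract promises QIT ideas, so I expect the key construction to pass through the representation theory of a tensor power of $\C^k$ and a positivity argument. Concretely: multiplicities in $\Sym^k(\Sym^{2n}\C^d)$ against $\Weyl{2\la}$ can be compared, via the $\GL_k \times \GL_d$ duality on $(\C^k \otimes \C^d)^{\otimes kn}$ or on a suitable space of configurations, to Kronecker-type coefficients $g(\la, \mu, \nu)$ or to plethysm coefficients with smaller parameters. The "even" hypothesis on $2\la$ is exactly what guarantees a symmetric (rather than antisymmetric) embedding: $\Weyl{2\la}$ appears in $\Sym^2$ of $\Weyl\la$ for any $\la$, which in quantum terms says the pure state $\proj{\psi}$ built from a highest weight vector of $\Weyl\la$ has marginal supported on $2\la$. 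I would leverage this by building the desired covariant as a square (a perfect-square invariant / a Gram-type determinant in the $v_i$) so that nonvanishing is automatic from it being a sum of squares that is not identically zero — a positivity argument of exactly the flavour one uses for quantum marginals.

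**Reduction steps and the main obstacle.** First I would dispose of the role of $d$: since $\la$ has at most $k$ parts and $k \leq d$, an occurrence of $\Weyl{2\la}$ in $\Sym^k(\Sym^{2n}\C^k)$ forces one in $\Sym^k(\Sym^{2n}\C^d)$ by the standard inclusion $\C^k \hookrightarrow \C^d$ and the fact that highest weight vectors with $\leq k$ parts survive; so assume $d = k$. Next I would set up an induction on $k$ (or on $n$), peeling off one part of $\la$ at a time: the base case $k=1$ is trivial since $\Sym^1(\Sym^{2n}\C) = \Sym^{2n}\C = \Weyl{(2n)}$. For the inductive step I would use a branching/restriction argument $\GL_k \downarrow \GL_{k-1}$, or a "plethysm of a plethysm" composition law, to reduce a partition $\la \vdash kn$ of length $\leq k$ to a partition of length $\leq k-1$, controlling the discrepancy by an explicit choice of the last vector $v_k$ and the corresponding factor $v_k^{2n}$. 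The main obstacle, I expect, is precisely this inductive bookkeeping: when one removes a row from $2\la$, the residual partition need not have size divisible by $(k-1)$ in the way the inductive hypothesis wants, nor need its length drop, so the induction must be set up on a more flexible statement — likely asserting the occurrence of $\Weyl{2\mu}$ in $\Sym^{a_1}(\Sym^{2n_1}) \otimes \cdots$ type mixed plethysms, or the nonvanishing of a family of covariants indexed by compositions — and then one proves the clean Theorem as a special case. Closing the gap between "the square is a sum of squares" and "the square is genuinely nonzero" (i.e. the highest weight vector does not vanish after projection to the decomposable locus) is the delicate point, and I would handle it by evaluating at a generic or cleverly chosen configuration $v_1,\dots,v_k$ and computing a leading term (e.g. a Vandermonde-type determinant) explicitly.
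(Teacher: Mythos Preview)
Your sum-of-squares intuition is exactly the heart of the paper's argument, and your observation that $\Weyl{2\la}$ sits inside the tensor square of $\Weyl\la$ is the reason the evenness hypothesis enters. But the execution in the paper is quite different from what you sketch, and the parts of your plan that you yourself flag as the ``main obstacle'' (induction on $k$, branching, mixed-plethysm bookkeeping) are simply absent from the actual proof. There is no induction at all.

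The paper does \emph{not} attempt to build a highest weight vector in $S=\Sym^k(\Sym^{2n}V)$. Instead it works dually: it suffices to exhibit $\ket\psi\in S$ and $\ket v$ in the isotypic component $2\la(T)$ of $T=V^{\otimes 2nk}$ with $\braket{v}{\psi}\neq 0$, since if $2\la(S)=0$ then $S\perp 2\la(T)$. The vector $\ket\psi$ is the explicit $\big(\sum_{i=1}^d \ket{i}^{\otimes 2n}\big)^{\otimes k}$, and $\ket v$ is a shuffle of $\ket{w}^{\otimes 2}$ for a coherent state $\ket w=\pi^{-1}g^{-1}\ket{v_\la}$ of weight~$\la$. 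Unwinding the inner product gives
\[
\braket{v}{\psi}=\sum_{i_1,\ldots,i_k}\big(\braket{w}{i_1^{\otimes n}\cdots i_k^{\otimes n}}\big)^2,
\]
which is your sum of squares.

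Two ingredients you are missing make this work without any recursion. First, the nonvanishing of a single term: one needs $\braket{w}{1^{\otimes n}\cdots k^{\otimes n}}\neq 0$ for some coherent state $\ket w$. This is equivalent to $\Weyl\la$ having a nonzero vector of the rectangular weight $(n,\ldots,n,0,\ldots,0)$, i.e.\ to the Kostka number $\kostka{\la}{(n^k)}$ being positive, which holds because $\la$ dominates $(n^k)$. This simple dominance fact is the entire combinatorial input and replaces your proposed induction and Vandermonde computation. Second, the squares must be genuine nonnegative reals, not complex squares; the paper arranges this by choosing $g\in\GLR d$ (possible since a nonzero polynomial on $\C^{d^2}$ cannot vanish on $\R^{d^2}$), so that $\ket w$ has real coefficients in the computational basis.

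In short: your positivity heuristic is right, but drop the induction entirely and replace your ``evaluate at a clever configuration'' step by the Kostka/dominance argument for the rectangular weight space together with the realness trick.
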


Here is a brief outline of the proof.
Let $V := \C^d$ and let $2\la(T)$~denote the isotypic component of $2\la$ in
$T:=(V^{\otimes 2n})^{\otimes k}$.
If no component of highest weight $2\la$ is contained in $\Sym^k(\Sym^{2n}V)$, then $\Sym^k(\Sym^{2n} V)$ and $2\la(T)$ are orthogonal. However, using techniques from quantum information theory,
we construct vectors $\ket v \in 2\la(T)$ and
$\ket \psi \in \Sym^k(\Sym^{2n}V)$
such that $\braket v \psi \neq 0$.
For this construction, which involves a sum of squares,
it is essential that all parts of the isotypic component are even.

\subsection{Connection to geometric complexity theory}\label{se:connGCT}

The most important open problem of algebraic complexity theory is
Valiant's Hypothesis~\cite{vali:79-3,vali:82},
which is an arithmetic analogue
of the famous P versus NP conjecture
(see \cite{bucs:96} for background information).
Valiant's Hypothesis can be easily stated in precise mathematical terms.

Consider the determinant $\det_d = \det [x_{ij}]_{1\le i,j\le d}$
of a $d$ by $d$ matrix of variables $x_{ij}$, and
for $m<d$, the {\em permanent} of its $m$ by $m$ submatrix defined as
$$
\per_m := \sum_{\sigma\in S_m} x_{1,\sigma(1)}\cdots x_{m,\sigma(m)} .
$$
We choose $z:=x_{dd}$ as a homogenizing variable and view
$\det_d$ and $z^{d-m}\per_m$
as homogeneous functions $\C^{d^2}\to \C$ of degree~$d$.
How large has $d$ to be in relation to~$m$ such that there is
a linear map $A\colon \C^{d^2}\to\C^{d^2}$ with the property that
\begin{equation}\tag{*}
z^{d-m}\per_m = \det_d \circ A ?
\end{equation}
It is known that such an $A$ exists for $d=O(m^2 2^m)$.
Valiant's Hypothesis states that (*)~is
impossible for $d$ polynomially bounded in~$m$.
%This is equivalent to $\VP_{ws}\ne \VNP$.

Mulmuley and Sohoni~\cite{GCT1} suggested
to study an orbit closure problem related to~(*).
Note that the group $\GL_{d^2} = \GLC {d^2}$ acts on the space
$S^d (\C^{d\times d})^*$
of homogeneous polynomials of degree~$d$
in the variables $x_{ij}$ by substitution.
Instead of (*), we ask now whether
%they suggested to prove the following stronger statement:
\begin{equation}\tag{**}
%\mbox{ $
z^{d-m}\per_m\in \overline{\GL_{d^2}\cdot\det_{d}} .
%$\quad is impossible for $d=m^{O(1)}$.}
\end{equation}
Mulmuley and Sohoni~\cite{GCT1} conjectured that
(**)~is impossible for $d=\text{poly}(m)$, which would imply
Valiant's Hypothesis.

Moreover, in~\cite{GCT1,GCT2} it was proposed to show
that (**) is impossible for specific values $m,d$ by
exhibiting an irreducible representation of $\SL_{d^2}$
in the coordinate ring of the orbit closure of $z^{d-m}\per_m$,
that does not occur in the coordinate ring
of $\overline{\GL_{d^2}\cdot\det_{d}}$.
We call such a representation of $\SL_{d^2}$
an {\em obstruction} for (**) for the values $m,d$.

We can label the irreducible $\SL_{d^2}$-representations
by partitions $\la$ into at most $d^2-1$ parts:
For $\la\in\N^{d^2}$ such that
$\la_1\ge\ldots\ge\la_{d^2-1} \ge \la_{d^2}=0$
we shall denote by $\Weyl\la (\SL_{d^2})$
the irreducible $\SL_{d^2}$-representation
obtained from the irreducible $\GL_{d^2}$-representation
$\Weyl \la$ with the highest weight~$\la$ by restriction.

If $\Weyl\la (\SL_{d^2})$ is an obstruction for $m,d$, then
we must have $|\la| = \sum_i \la_i = \ell d$ for some $\ell$,
see~\cite[Prop. 5.6.2]{BLMW:09}.
The irreducible representations $\Weyl\la (\SL_{d^2})$
not occurring in $\C[\overline{\GL_{d^2}\cdot\det_{d}}]$
are called candidates for obstructions and
can be characterized by the vanishing of certain
Kronecker coeffients, see~\cite[Prop. 5.2.1]{BLMW:09}
and \cite{BCI:09}.

For understanding which $\Weyl\la (\SL_{d^2})$ do occur in
$\C[\overline{\GL_{d^2}\cdot z^{d-m}\per_m}]$,
we first need to understand the case $m=d$.

\begin{proposition}\label{pro:perorbit}
Suppose that $|\la| = \ell d$. Then
$\Weyl\la (\SL_{d^2})$ occurs in
$\C[\overline{\GL_{d^2}\cdot \per_d}]$
iff there exist partitions $\mu,\nu$ of $\ell d$ into at most $d$ parts
with the following properties:
\begin{itemize}

\item[(1)] The tensor product $\Specht{\la} \otimes\Specht{\mu}\otimes \Specht{\nu}$ 
of irreducible representations of the symmetric group $\SG_{\ell d}$ 
contains the trivial representation 
(i.e., the corresponding Kronecker coefficient is strictly positive).
Moreover, if $\mu=\nu$, then $\Specht{\la} \otimes \mathrm{Sym}^2\Specht{\mu}$ 
contains the trivial representation.  

\item[(2)] $\Weyl \mu$ and $\Weyl \nu$ both occur in the plethysm $\Sym^d(\Sym^{\ell} \C^d)$.

\end{itemize}
\end{proposition}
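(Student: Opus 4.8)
The plan is to analyze the coordinate ring $\C[\overline{\GL_{d^2}\cdot\per_d}]$ via the orbit map and the stabilizer of $\per_d$. First I would recall that the permanent $\per_d$, as a point in $S^d(\C^{d\times d})^*$, has a large stabilizer inside $\GL_{d^2}$: up to a finite extension it is generated by the diagonal torus action (rescaling rows and columns, with the product of scalings fixed) together with the coordinate permutations induced by $S_d \times S_d$ (permuting rows and columns) and the transpose. Thus the identity component of the stabilizer is essentially the maximal torus $\T$ of $\SL_d\times\SL_d$ (embedded in $\SL_{d^2}$ via the outer tensor action on $\C^d\otimes\C^d$), and the full stabilizer is generated by this torus together with $S_d\times S_d$ (i.e.\ the normalizer of $\T$ in $\SL_d\times\SL_d$, giving the hyperoctahedral/wreath structure) and the $\Z/2$ transpose symmetry. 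Since the orbit $\GL_{d^2}\cdot\per_d$ is a homogeneous space $\GL_{d^2}/\mathrm{Stab}(\per_d)$, its coordinate ring contains $\Weyl\la(\SL_{d^2})$ with multiplicity equal to the dimension of the space of $\mathrm{Stab}$-invariants in the dual $\Weyl\la(\SL_{d^2})^*$, and the same characterization carries over to the orbit closure for the purpose of \emph{occurrence} (an irreducible occurs in $\C[\overline{\GL\cdot v}]$ iff it occurs in $\C[\GL\cdot v]$, since the restriction map is injective on the relevant isotypic pieces by a standard argument using that the boundary has strictly smaller dimension and the algebra is a domain — more precisely $\C[\overline{\GL\cdot v}] \hookrightarrow \C[\GL\cdot v]$ and an irreducible in the latter that is generated by a regular function extends).

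Next I would compute these invariants explicitly. Restricting $\Weyl\la(\SL_{d^2})$, viewed as a subrepresentation of $(\C^{d^2})^{\otimes |\la|}$, to $\SL_d\times\SL_d$ acting diagonally via the tensor embedding, the branching is governed by Kronecker coefficients: $\Weyl\la(\GL_{d^2})$ restricted to $\GL_d\times\GL_d$ decomposes as $\bigoplus_{\mu,\nu} g(\la,\mu,\nu)\, \Weyl\mu\otimes\Weyl\nu$ where $g(\la,\mu,\nu)$ is the Kronecker coefficient and $\mu,\nu$ run over partitions of $|\la|$ with at most $d$ parts. The torus $\T$-invariants in $\Weyl\mu\otimes\Weyl\nu$ pick out the zero-weight space, which is non-zero precisely when we can hope for a trivial component, and then imposing invariance under the normalizer $N(\T) = \T\rtimes(S_d\times S_d)$ in each factor forces $\mu$ and $\nu$ to be such that $\Weyl\mu$ (resp.\ $\Weyl\nu$) contains a vector fixed by the full symmetric group $S_d$ acting by permutation of the coordinates of $\C^d$ — and this fixed-vector condition is classically equivalent, by Schur–Weyl and the theory of the coordinate ring of $\SL_d/N(\T)$, to $\Weyl\mu$ occurring in $\Sym^d(\Sym^{\ell}\C^d)$ (here $\ell = |\la|/d$), which is condition (2). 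Finally, combining the $S_d$-invariance on both tensor factors with the requirement that $\la,\mu,\nu$ glue to a genuine invariant reproduces condition (1): the multiplicity space $\mathrm{Hom}_{\SG}(\mathrm{triv}, \Specht\la\otimes\Specht\mu\otimes\Specht\nu)$ must be non-zero, and the transpose $\Z/2$-symmetry of the stabilizer, when $\mu=\nu$, projects onto the symmetric part $\Sym^2\Specht\mu$, giving the refinement in (1).

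Concretely the key steps, in order, are: (i) identify $\mathrm{Stab}_{\GL_{d^2}}(\per_d)$ and its component group (cite or reprove the permanent-stabilizer result); (ii) reduce occurrence in $\C[\overline{\GL_{d^2}\cdot\per_d}]$ to non-vanishing of $\mathrm{Stab}$-invariants in $\Weyl\la(\SL_{d^2})^*$; (iii) use Kronecker branching to pass from $\SL_{d^2}$ to $\SL_d\times\SL_d$, introducing $\mu,\nu$; (iv) compute torus-then-Weyl-group invariants to convert the $S_d$-fixed-vector conditions into plethysm membership, yielding (2); (v) track the combined $\SG_{\ell d}$-equivariance plus the transpose involution to obtain the Kronecker condition (1), including the $\Sym^2$ refinement. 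The main obstacle I expect is step (iv)–(v): correctly bookkeeping the interplay between the $\GL_{d^2}$-isotypic decomposition, the $\SG_{\ell d}$-action on multiplicity spaces (Schur–Weyl duality applied twice, once for $\C^{d^2}$ and once compatibly for $\C^d\otimes\C^d$), and the discrete stabilizer symmetries, so that the two separate $S_d$-invariance conditions and the transpose symmetry assemble into exactly the stated Kronecker-coefficient positivity rather than something weaker or stronger. Getting the $\mu=\nu$ case right — why the transpose symmetry upgrades $\Specht\la\otimes\Specht\mu\otimes\Specht\mu$ containing the trivial to $\Specht\la\otimes\Sym^2\Specht\mu$ containing the trivial — requires care with how the $\Z/2$ acts on the tensor-square of the multiplicity space, and that is where I would be most cautious about sign and symmetrization conventions.
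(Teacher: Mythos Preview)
The paper does not give a self-contained argument here: its entire proof is the sentence ``This follows by combining \cite[Prop.~4.4.1]{BLMW:09} with \cite[Prop.~5.5.2]{BLMW:09}.'' Your proposal is therefore not a reproduction of the paper's proof but rather a reconstruction of what those cited propositions presumably contain. The ingredients you list --- the Marcus--May description of $\mathrm{Stab}_{\GL_{d^2}}(\per_d)$ as $(T\rtimes(S_d\times S_d))\rtimes\Z/2$, Frobenius reciprocity on $\GL_{d^2}/\mathrm{Stab}$, the Kronecker branching $\Weyl\la\!\downarrow_{\GL_d\times\GL_d}=\bigoplus g(\la,\mu,\nu)\,\Weyl\mu\otimes\Weyl\nu$, Gay's theorem identifying $\dim(\Weyl\mu)^{N(T)}$ with the plethysm multiplicity of $\Weyl\mu$ in $\Sym^d(\Sym^\ell\C^d)$, and the transpose involution forcing the $\Sym^2$ refinement when $\mu=\nu$ --- are the right ones and assemble as you describe.

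There is, however, a genuine gap at your step~(ii). You assert that an irreducible occurs in $\C[\overline{\GL_{d^2}\cdot\per_d}]$ iff it occurs in $\C[\GL_{d^2}\cdot\per_d]$, and justify this with ``the boundary has strictly smaller dimension and the algebra is a domain'' and ``an irreducible in the latter that is generated by a regular function extends.'' Neither clause is a valid argument. The injection $\C[\overline{\GL\cdot v}]\hookrightarrow\C[\GL\cdot v]$ gives only one direction; the converse --- that every $\Weyl\la$ appearing in the orbit's coordinate ring already appears in that of the closure --- is \emph{not} automatic. It would follow, for instance, from normality of the orbit closure together with $\mathrm{codim}(\partial)\ge 2$ via Hartogs, but you have not established either, and in the GCT setting such normality questions are delicate and generally open. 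Your stabilizer computation yields exactly the multiplicities in $\C[\GL_{d^2}\cdot\per_d]$; passing to the closure is the step that genuinely requires the input from \cite{BLMW:09}, and it is precisely what you have hand-waved. For the ``only if'' direction of the proposition your argument is fine; for the ``if'' direction you need either an explicit construction of a highest weight polynomial on the ambient space not vanishing at $\per_d$, or a structural result about the closure, and you have supplied neither.
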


\begin{proof}
This follows by combining
\cite[Prop. 4.4.1]{BLMW:09} with~\cite[Prop. 5.5.2]{BLMW:09}.
\end{proof}

It is thus essential for the geometric complexity program
to understand the plethysms appearing in
Weintraub's conjecture.
Our result shows that the second condition~(2) is satisfied in ``most cases''.

\subsection{Techniques from quantum information theory}\label{se:QIT}

It is one aim of quantum information theory to understand quantum correlations, or entanglement,
between spatially separated quantum systems and to utilise them in novel information processing applications.
A mathematical problem arising in this context is the quantum marginal problem. In its simplest form,
one wishes to determine the possible triple of spectra of the local quantum states
$\rho_A = \tr_{BC} \proj \psi, \rho_B=\tr_{AC} \proj \psi$ and $\rho_C=\tr_{AB} \proj \psi$,
where $\ket \psi \in \C^{d_A} \otimes \C^{d_B} \otimes \C^{d_C}$.
In~\cite{ChrMit06, Klyachko04, ChristHarrowMitch07} it was shown that this problem is related
to an asymptotic version of the problem of the positivity of the Kronecker coefficients of
the symmetric group (see condition (1) in Proposition~\ref{pro:perorbit}.).
In~\cite{ChrMit06, ChristHarrowMitch07} this relation is obtained by considering
the $n$-fold tensor product of $\ket \psi$ and projecting into the local isotypic components,
which are the analogues of typical subspaces in classical information theory.
For large~$n$, relations between the local spectra then transfer to relations between representations
of the symmetric group. For an application of this work in the context of geometric complexity theory
to understand which irreducible representations occur in
$\C[\overline{\GL_{d^2}\cdot\det_{d}}]$, see~\cite{BCI:09}.

The technique can also be applied in order to determine certain asymptotic behaviour of
the plethysm coefficient (see also~\cite{pauli}).
It is the challenge of this work to improve the asymptotical analysis
to obtain the precise statement of Weintraub's conjecture.

\section{Preliminaries}

\subsection{Highest weight vectors}

We briefly collect some facts about
the representation theory of $\GLC d$
and refer the reader to~\cite{FH:91} for details.
For our computational purposes it will be convenient
to use the bra and ket notation from physics.
%We denote by $\T_d$ the subgroup of $\GLC d$ consisting of the diagonal matrices in $\GLC d$
%and let $\B_d \subseteq \GLC d$ denote the group of upper triangular nonsingular matrices in $\GLC d$.

A rational $\GLC d$-module~$M$ decomposes into its weight spaces
$$
M =  \bigoplus_{\la\in\Z^d} M_\la ,
$$
where $M_\la$ consists of the vectors $\ket v\in M$ such that
$$
\diag(t_1,\ldots,t_d)\cdot \ket v = t_1^{\la_1} \cdots t_d^{\la_d} \ket v
$$
for all $t_1,\ldots,t_d\in \C^\times$.
The vectors in $M_\la$ are said to be of {\em weight}~$\la=(\la_1,\ldots,\la_d)$.

Let $\B_d \subseteq \GLC d$ denote the group of upper triangular matrices in $\GLC d$.
A nonzero vector $\ket v\in M_\la$ is said to be a {\em highest weight vector} iff
the line $\C\, \ket {v}$ is stable under the action of $\B_d$.
The corresponding weight then satisfies $\la_1\ge\cdots\ge \la_d$.
It is well-known that the $\GLC d$-submodule generated by a highest weight vector
is irreducible and that a $\GLC d$-submodule is irreducible
iff it contains exactly one $\B_d$-stable line.
Hence one can assign to an irreducible $\GLC d$-module~$M$ the weight~$\la$
of their highest weight vectors
(which are uniquely determined up to a scalar).
One calls $\la$ the {\em highest weight of~$M$}.
Two irreducible $\GLC d$-modules are isomorphic iff they have the same
highest weight.

For each $\la\in\Z^d$ with nonincreasing components
there exists an irreducible $\GLC d$-module with highest weight~$\lambda$.
We denote it by $\Weyl \la$ and call it the {\em Weyl module of weight $\la$}.

%We need the following well-known fact.

\begin{lemma}\label{cor_rectweight}
Let $\la$ be a partition of $kn$ into at most $k$ parts, $k\le d$.
Then the $\GLC d$-module $\Weyl{\la}$ corresponding to $\la$
contains a nonzero vector of weight 
$(\underbrace{n,n,\ldots,n}_{k}, \underbrace{0, 0, \ldots, 0}_{d-k})$.
\end{lemma}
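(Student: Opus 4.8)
The plan is to exhibit the desired weight vector explicitly using the combinatorial description of weight multiplicities in Weyl modules. Recall that for a partition $\la$ with at most $k \le d$ parts, the multiplicity of the weight $\beta = (\beta_1,\ldots,\beta_d)$ in $\Weyl\la$ equals the Kostka number $\kostka{\la}{\beta}$, the number of semistandard Young tableaux of shape $\la$ and content $\beta$. Thus it suffices to show that $\kostka{\la}{\beta} \neq 0$ for the rectangular content $\beta = (n^k, 0^{d-k})$; equivalently, that there exists at least one semistandard Young tableau of shape $\la$ in which each of the entries $1,2,\ldots,k$ appears exactly $n$ times (and no larger entry appears).

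First I would reduce to the case $d = k$, since a tableau using only the entries $1,\ldots,k$ lives inside the first $k$ columns of the weight lattice and the remaining $d-k$ zero coordinates are irrelevant; the branching/restriction of $\Weyl\la$ from $\GLC d$ to $\GLC k$ contains $\Weyl\la(\GLC k)$, so a weight-$(n^k)$ vector there furnishes one upstairs. Next I would construct the tableau directly. Since $\la \vdash kn$ has at most $k$ parts, write $\la = (\la_1 \ge \cdots \ge \la_k \ge 0)$ with $\sum_i \la_i = kn$. The natural candidate is the tableau obtained by filling the Young diagram of $\la$ greedily row by row: place $n$ copies of $1$, then $n$ copies of $2$, and so on, reading the boxes of $\la$ in the order row $1$ left-to-right, then row $2$, etc. One then checks this is semistandard: rows are weakly increasing by construction, and columns are strictly increasing because consecutive rows have lengths $\la_i \ge \la_{i+1}$, so the block structure cannot cause a repeat in a column — this is precisely where $\la$ having at most $k$ parts and the content being the ``flat'' rectangle $(n^k)$ make the argument go through.

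I expect the main obstacle to be the verification that the greedy filling is genuinely column-strict in all cases, i.e. ruling out the situation where some value $j$ occupies two vertically adjacent boxes. This requires a short argument tracking, for each value $j$, the set of rows it occupies: one shows that the $n$ copies of $j$ occupy a contiguous interval of boxes in reading order that spans rows in a ``staircase'' fashion, and that within any single column the copies of $j$ and $j+1$ cannot overlap because the total count $kn$ is distributed so that each value's block advances by exactly $n$ boxes while each column has height at most $k$. Alternatively, to sidestep an ad hoc combinatorial check, one may invoke the known fact that $\kostka{\la}{\beta} > 0$ whenever $\beta$ is dominated by $\la$ in the dominance order, and observe that the rectangle $(n^k)$ is dominated by every partition $\la$ of $kn$ with at most $k$ parts (its partial sums $jn$ are minimal among such partitions); this gives the conclusion immediately. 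Either route completes the proof.
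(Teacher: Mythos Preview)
Your alternative route via dominance is exactly the paper's proof: weight multiplicities in $\Weyl\la$ are Kostka numbers, $\kostka{\la}{\mu}>0$ iff $\la$ dominates $\mu$, and any partition of $kn$ with at most $k$ parts dominates the rectangle $(n^k,0^{d-k})$. That argument is complete and matches the paper verbatim.

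However, your primary route---the greedy row-reading filling---does not work, so the claim that ``either route completes the proof'' is false. Take $k=n=3$ and $\la=(6,2,1)$. Filling the nine boxes in reading order with $1,1,1,2,2,2,3,3,3$ gives
\[
\begin{array}{cccccc}
1&1&1&2&2&2\\
3&3\\
3
\end{array}
\]
whose first column $1,3,3$ is not strictly increasing. The problem is not an overlap between consecutive values $j$ and $j+1$, as your sketch anticipates, but that a single value can land in two boxes of the same column when a long row is followed by short rows. Your heuristic (``each value's block advances by exactly $n$ boxes while each column has height at most $k$'') does not prevent this. A correct direct construction exists (e.g.\ fill column by column, or use the standard inductive proof of the dominance criterion), but it is not the one you describe. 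Since you already have the dominance argument in hand, simply drop the greedy construction and keep the second paragraph's final two sentences; that is precisely the paper's proof.
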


\begin{proof}
The dimension of the weight space of $\Weyl{\la}$ with the weight $\mu\in\N^{d}$
is given by the Kostka number $\kostka \la \mu$,
which is the number of semistandard Young tableaux
of shape $\la$ and content $\mu$, cf.~\cite[p.~368]{gw:09}.
It is known that $\kostka \la \mu > 0$ iff
$\lambda$ dominates $\mu$, that is,
$\sum_{j=1}^i \mu_j \leq \sum_{j=1}^i \la_j$ for all~$i$,
see~\cite[p.~26]{fult:97}.
%\cite[p.~457]{FH:91}.
On the other hand, it is clear that $\la$
dominates the rectangular partition $(n,n,\ldots,n, 0, 0, \ldots, 0)$.
\end{proof}

\subsection{Plethysms}

Plethysms are compositions of representations.
More precisely, given a representation $\rho\colon G\to \GLC \ell$
of a group~$G$ and a representation $\tau\colon \GLC \ell \to \GLC N$
of $\GLC \ell$,
the {\em plethysm} of $\rho$ and $\tau$ is defined as the composition
$\tau \circ \rho$.
%a plethysm $\tau(\rho)$ is the representation of a group $G$ defined by
%$$
%\tau(\rho)(g):=\tau(\rho(g))
%$$
%where $g \in G$, $\rho\colon G\to \GLC \ell$
%is an $\ell$-dimensional representation of~$G$ and
%$\tau$ is a representation of $\GLC \ell$.
In general, plethysms are reducible.
Their decomposition into irreducible representations
is a notoriously difficult problem
as the dimension of $\tau\circ\rho$ is typically large.

Here, we are concerned with $G=\GLC d$,
the representation~$\rho$ corresponds to $\Weyl {(2n, 0, 0, \ldots, 0)}$ ($d-1$ zeros)
and $\tau$~corresponds to $\Weyl {(k, 0, 0, \ldots, 0)}$ ($\ell-1$ zeros).
Since $\Weyl {(2n, 0, \ldots, 0)}$ can be realized as the symmetric power $\Sym^{2n} \C^d$,
i.e., the subspace of $(\C^d)^{\otimes 2n}$ of all vectors invariant under the action of $\SG_{2n}$,
we are concerned with the decomposition of the specific plethysm
$$
\Sym^k(\Sym^{2n}\C^d)
$$
into irreducibles.

\subsection{Isotypic components}

Let $M$ be a rational $\GLC d$-module.
The {\em isotypic component} $\la(M)$ of~$M$ of type $\la$
is defined as the sum of all irreducible submodules of $M$ with highest weight~$\la$.
Suppose that a unitary invariant Hermitian inner product~$\langle\ ,\ \rangle$
has been chosen on~$M$.
It is a well-known fact~\cite{FH:91}
that distinct isotypic components of $M$ are
orthogonal, so the isotypic decomposition
$M=\oplus_\la \la(M)$ is a decomposition into pairwise orthogonal subspaces.

In the following let $V:=\C^d$.
On the tensor space $T:=V^{\otimes q}$ we have two commuting linear actions,
of the general linear group~$\GLC d$
and the symmetric group $\SG_q$, namely,
for $g \in \GLC d$ and $\pi \in \SG_q$,
\begin{eqnarray*}
g (\ket{v_1} \otimes \cdots \otimes \ket{v_q}) &:=& g \ket{v_1} \otimes \cdots \otimes g  \ket{v_q} ,\\
\pi (\ket{v_1} \otimes \cdots \otimes \ket{v_q}) &:=& \ket{v_{\pi^{-1}(1)}} \otimes \cdots \otimes \ket{v_{\pi^{-1}(q)}}.
\end{eqnarray*}
The action of $S_q$ extends to the group algebra $\C[S_q]$.

We are interested in the isotypic components $\la(T)$ of $T$.
The highest weights~$\la$ occurring
consist of nonnegative numbers
and satisfy $|\la| = \sum_i \la_i =q$. Hence $\la$ is a partition of~$q$
into at most $d$ parts.
An element $a\in \C[\SG_q]$ acts as $\GLC d$-morphism and hence leaves $\la(T)$ invariant.

We proceed with a few simple observations.

\begin{lemma}\label{lem_tensorsquare}
If $\ket {v_\lambda}\in V^{\otimes p}$ is a highest weight vector of weight~$\la$
and $\ket {v_\mu}\in V^{\otimes q}$ is a highest weight vector of weight~$\mu$,
then $\ket {v_\lambda}\ket {v_\mu} := \ket {v_\lambda}\otimes \ket {v_\mu}$
is a highest weight vector in $V^{\otimes (p+q)}$ of
weight $\lambda +\mu$, where $(\lambda+\mu)_i=\la_i+\mu_i$.
\end{lemma}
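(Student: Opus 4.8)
The plan is to verify directly the two defining properties of a highest weight vector from Section~2.1: that $\ket{v_\lambda}\otimes\ket{v_\mu}$ is a nonzero weight vector of weight $\lambda+\mu$, and that the line it spans is stable under $\B_d$. The key structural observation, from which everything follows formally, is that the $\GLC d$-action on $V^{\otimes(p+q)}\cong V^{\otimes p}\otimes V^{\otimes q}$ is the ``diagonal'' one: by the definition of the action on tensor powers recalled above, any $g\in\GLC d$ sends $\ket{w}\otimes\ket{w'}$ to $g\ket{w}\otimes g\ket{w'}$ for $\ket{w}\in V^{\otimes p}$ and $\ket{w'}\in V^{\otimes q}$.

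First I would compute the weight. Applying the diagonal action to $g=\diag(t_1,\ldots,t_d)$ and using that $\ket{v_\lambda}$ and $\ket{v_\mu}$ have weights $\lambda$ and $\mu$,
\[
\diag(t_1,\ldots,t_d)\,\bigl(\ket{v_\lambda}\otimes\ket{v_\mu}\bigr)
= \bigl(t_1^{\lambda_1}\cdots t_d^{\lambda_d}\bigr)\bigl(t_1^{\mu_1}\cdots t_d^{\mu_d}\bigr)\,\bigl(\ket{v_\lambda}\otimes\ket{v_\mu}\bigr),
\]
so $\ket{v_\lambda}\otimes\ket{v_\mu}$ lies in the weight space of weight $\lambda+\mu$, and it is nonzero because it is a tensor product of two nonzero vectors.

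Next I would check $\B_d$-stability of the line $\C\bigl(\ket{v_\lambda}\otimes\ket{v_\mu}\bigr)$. Since $\C\ket{v_\lambda}$ and $\C\ket{v_\mu}$ are $\B_d$-stable, for every $b\in\B_d$ there are scalars $\chi_\lambda(b),\chi_\mu(b)\in\C$ with $b\ket{v_\lambda}=\chi_\lambda(b)\ket{v_\lambda}$ and $b\ket{v_\mu}=\chi_\mu(b)\ket{v_\mu}$; applying the diagonal action again gives $b\bigl(\ket{v_\lambda}\otimes\ket{v_\mu}\bigr)=\chi_\lambda(b)\chi_\mu(b)\bigl(\ket{v_\lambda}\otimes\ket{v_\mu}\bigr)$, which lies in the line. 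Combining the two paragraphs, $\ket{v_\lambda}\otimes\ket{v_\mu}$ is a highest weight vector of weight $\lambda+\mu$ in $V^{\otimes(p+q)}$. There is essentially no obstacle here: the only points requiring a moment's care are the identification of the $\GLC d$-action on the tensor product as the diagonal one and the remark that a tensor product of nonzero vectors is nonzero.
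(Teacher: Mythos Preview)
Your proof is correct and follows exactly the paper's approach: the paper's proof is the one-line remark that since $\C\ket{v_\lambda}$ and $\C\ket{v_\mu}$ are $\B_d$-stable lines, the tensor product is $\B_d$-stable and of weight $\lambda+\mu$. You have simply spelled out the details (the diagonal action, the weight computation, and the nonvanishing of the tensor product) that the paper leaves implicit.
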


\begin{proof}
Since $\C\,\ket {v_\la}$ and $\C\,\ket {v_\mu}$ are $\B_d$-stable lines,
it is clear that
$\ket {v_\lambda}\ket {v_\mu}$ is also $\B_d$-stable and of weight $\lambda+\mu$.
\end{proof}

\begin{definition}
Let $\ket {v_\lambda}\in V^{\otimes q}$ be a highest weight vector of weight~$\la$,
$g \in \GLC d$, and $\pi\in \SG_q$.
Then we call a vector of the form $\pi g \ket {v_\la}$
a {\em coherent state} of weight~$\la$.
\end{definition}

Lemma~\ref{lem_tensorsquare} immediately implies the following.

\begin{corollary} \label{cor_tensorsquare}
If $\ket w\in V^{\otimes q}$ is a coherent state of weight~$\la$, then
$\ket w \ket w$ lies in the isotypic component $(2\la)(V^{\otimes 2q})$.
\end{corollary}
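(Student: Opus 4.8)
The plan is to reduce to the case of a genuine highest weight vector and then transport the conclusion along the two commuting group actions on $V^{\otimes 2q}$. First I would dispose of the coherent state $\ket{v_\la}$ itself: by Lemma~\ref{lem_tensorsquare} applied with $p=q$ and $\mu=\la$, the vector $\ket{v_\la}\ket{v_\la}\in V^{\otimes 2q}$ is a highest weight vector of weight $2\la$. Hence the $\GLC d$-submodule it generates is irreducible with highest weight $2\la$, and by the definition of the isotypic component it lies in $(2\la)(V^{\otimes 2q})$.

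Next I would write an arbitrary coherent state of weight $\la$ as $\ket w=\pi g\ket{v_\la}$ with $g\in\GLC d$ and $\pi\in\SG_q$, and set $\ket u:=g\ket{v_\la}\in V^{\otimes q}$. From the definition of the $\GLC d$-action on $T$ one has $\ket u\otimes\ket u=g\ket{v_\la}\otimes g\ket{v_\la}=g(\ket{v_\la}\otimes\ket{v_\la})$, where on the right $g$ denotes the diagonal action on all $2q$ tensor factors. Applying $\pi$ to each of the two copies of $V^{\otimes q}$ is precisely the action on $V^{\otimes 2q}$ of the permutation $\tilde\pi\in\SG_{2q}$ that permutes the first block of $q$ factors by $\pi$ and the second block by $\pi$. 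Therefore
$$
\ket w\ket w=(\pi\ket u)\otimes(\pi\ket u)=\tilde\pi\, g\,(\ket{v_\la}\ket{v_\la}).
$$

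It then remains to check that each of the two operators on the right preserves the isotypic component $(2\la)(V^{\otimes 2q})$. For $g$ this is immediate, since $(2\la)(V^{\otimes 2q})$ is a $\GLC d$-submodule of $V^{\otimes 2q}$ and hence stable under the action of every group element. For $\tilde\pi$ I would use that the $\SG_{2q}$-action on $V^{\otimes 2q}$ commutes with the $\GLC d$-action, so $\tilde\pi$ acts as a $\GLC d$-morphism and therefore maps the isotypic component of type $2\la$ into itself. Combining these with the first step gives $\ket w\ket w\in(2\la)(V^{\otimes 2q})$. The proof is short; the only mild subtlety I anticipate is bookkeeping — keeping straight that the isotypic component is group-stable because it is a submodule for the diagonal $\GLC d$-action, whereas $\tilde\pi$ preserves it because the $\SG_{2q}$-action consists of $\GLC d$-morphisms — together with the correct identification of the ``doubled'' permutation $\tilde\pi$.
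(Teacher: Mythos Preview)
Your proof is correct and is exactly the argument the paper has in mind; the paper simply records that the corollary ``immediately'' follows from Lemma~\ref{lem_tensorsquare}, and you have carefully unpacked that immediacy by noting that $\ket{v_\la}\ket{v_\la}$ is a highest weight vector of weight~$2\la$ and that the isotypic component $(2\la)(V^{\otimes 2q})$ is preserved by both the diagonal $\GLC d$-action and the $\SG_{2q}$-action.
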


Note that the standard Hermitian inner product on $V=\C^d$ extends
in a natural way to a unitary invariant inner product on
$T=V^{\otimes q}$.

Our proof strategy for the main theorem is based on
the following lemma. %consequence of Lemma~\ref{le:orth}.

\begin{lemma}\label{lem_projection}
Let $V=\C^d$, $T=V^{\otimes k\ell}$,
and $S := \Sym^k(\Sym^{\ell}V) \subseteq T$.
Further, let $\mu$ be a highest weight for $\GLC d$.
If there exist vectors $\ket \psi \in S$ and $\ket v \in \mu(T)$
such that $\braket v \psi \neq 0$, then $\mu(S)$ is nonzero.
\end{lemma}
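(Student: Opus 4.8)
The plan is to prove the contrapositive: I would assume that $\mu(S) = 0$ and show that $S$ is then orthogonal to $\mu(T)$, which contradicts the existence of $\ket\psi\in S$ and $\ket v\in\mu(T)$ with $\braket v\psi\neq 0$.

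First I would record the compatibility between the isotypic decompositions of $S$ and of $T$. Since $S = \Sym^k(\Sym^{\ell}V)$ is a rational $\GLC d$-submodule of $T = V^{\otimes k\ell}$, complete reducibility lets me write $S$ as a sum of irreducible $\GLC d$-submodules. Each such submodule is simultaneously an irreducible submodule of $T$, and if its highest weight is $\nu$ it therefore lies inside the isotypic component $\nu(T)$. Summing over all such submodules yields $\nu(S)\subseteq\nu(T)$ for every highest weight $\nu$.

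Next I would feed in the hypothesis $\mu(S) = 0$: then $S$ equals the sum of its isotypic components $\nu(S)$ over $\nu\neq\mu$, and hence $S\subseteq\bigoplus_{\nu\neq\mu}\nu(T)$ by the previous step. As recalled above, distinct isotypic components of $T$ are pairwise orthogonal with respect to the unitary-invariant Hermitian inner product induced from the standard one on $V$, so $\bigoplus_{\nu\neq\mu}\nu(T)$ is orthogonal to $\mu(T)$. Thus $S\perp\mu(T)$, which forces $\braket v\psi = 0$ for all $\ket\psi\in S$ and all $\ket v\in\mu(T)$ — a contradiction. Hence $\mu(S)\neq 0$, as claimed.

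There is essentially no obstacle here: the argument involves no computation, and the only point worth stating carefully is the inclusion $\nu(S)\subseteq\nu(T)$ for the submodule $S\subseteq T$, which follows immediately from complete reducibility and the definition of the isotypic component; everything else is a direct appeal to the orthogonality of isotypic components recalled in the preliminaries.
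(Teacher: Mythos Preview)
Your proof is correct and follows essentially the same route as the paper's: assume $\mu(S)=0$, use the isotypic decomposition $S=\bigoplus_\nu \nu(S)$ together with the inclusion $\nu(S)\subseteq\nu(T)$, and conclude $S\perp\mu(T)$ from the orthogonality of distinct isotypic components of~$T$. The only difference is that you spell out why $\nu(S)\subseteq\nu(T)$ via complete reducibility, which the paper simply asserts.
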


\begin{proof}
Suppose that $\mu(S)=0$.
We need to show that $S$ and $\mu(T)$ are orthogonal.
Since $S=\oplus_\la \la(S)$, it is sufficient to show that $\la(S)$
and $\mu(T)$ are orthogonal for all $\la\ne \mu$ (the case $\la=\mu$
being trivial because of $\mu(S)=0$).
But $\la(S)\subseteq \la(T)$ and $\la(T)$ and $\mu(T)$ are orthogonal
for $\la\ne \mu$.
\end{proof}

\subsection{Construction of highest weight vectors}

We will need an explicit construction for highest weight vectors in $V^{\otimes q}$.
Let $\{\ket{i} \}$ be the standard orthonormal basis for $\C^d$.
%with respect to which the matrices of the maximal torus are diagonal.
This defines an orthonormal basis of $V^{\otimes q}$ given by
$\{ \ket {i_1i_2\ldots i_q}\}$, $i_j \in \{1, \ldots, d\}$,
where we used the shorthand
$\ket {i_1i_2\ldots i_q}= \ket {i_1} \otimes \ket {i_1} \otimes \cdots \otimes \ket {i_q}$.
We call this basis the \emph{computational basis} borrowing terminology from quantum information theory.
For $1\le k\le d$ we define the following vector
(sometimes called Slater-determinant)
$$
\ket{v_k} := \sum_{\pi \in \SG_k} \sgn(\pi) \pi \ket{12\cdots k} \in V^{\otimes k} .
$$
It is easy to check that $\ket{v_k}$ is a highest weight vector of
weight $(1,\ldots,1,0,\ldots,0)$, where $1$ occurs $k$ times.

Let $\lambda$ be a partition of $q$ into at most $d$ parts.
The \emph{conjugate partition} $\la'$ corresponding to $\la$ is
defined by $\la'_j =|\{i\mid \la_i \ge j \}|$.
(The Young diagram corresponding to $\la'$ is obtained by reflecting
the one of~$\la$ at the main diagonal.)
It follows from Lemma~\ref{lem_tensorsquare} that
\begin{equation}\label{eq_explicit}
\ket {v_\la} := \ket{v_{\la'_1}} \ket{v_{\la'_2}} \cdots \ket{v_{\la'_{\ell(\la')}}}
\end{equation}
is a highest weight vector in $V^{\otimes q}$ of weight~$\la$, where $\ell(\la')$ denotes the number of nonzero parts of $\la'$.
(Note that each coefficient of~$\ket {v_\la}$,
when expanded in the computational basis, is a real number.)

The following is a direct corollary of Lemma~\ref{cor_rectweight}.

\begin{lemma}\label{crux}
Let $\lambda$ be a partition of $kn$ into at most~$k$ parts, $k\le d$.
Then the $\GLC d$-module $\Weyl \la$ generated by
%the specific highest weight vector
$\ket {v_\la}$ %defined in \eqref{eq_explicit}
contains a nonzero vector $\ket u$ of the form
\begin{equation}\label{eq:defu}
\ket u= \sum_{\pi \in \SG_{nk}}
\alpha_\pi \ \pi \cdot \ket{1^{\otimes n}2^{\otimes n} \cdots k^{\otimes n}}
\end{equation}
for some $\alpha_\pi \in \C$.
\end{lemma}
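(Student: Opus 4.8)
The plan is to combine the explicit realization of $\Weyl\la$ inside tensor space with Lemma~\ref{cor_rectweight}. First I would set $M := \GLC d\cdot\ket{v_\la}\subseteq V^{\otimes kn}$. By the discussion preceding~\eqref{eq_explicit}, $\ket{v_\la}$ is a highest weight vector of weight~$\la$, so $M$ is an irreducible $\GLC d$-module with highest weight~$\la$; this is precisely the module called $\Weyl\la$ in the statement. Applying Lemma~\ref{cor_rectweight} to this concrete copy of $\Weyl\la$, I obtain a nonzero vector $\ket u\in M$ of weight $\mu := (\underbrace{n,\ldots,n}_{k},\underbrace{0,\ldots,0}_{d-k})$.

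Next I would identify the weight space $(V^{\otimes kn})_\mu$. Expanding in the computational basis, a basis vector $\ket{i_1 i_2\cdots i_{kn}}$ satisfies $\diag(t_1,\ldots,t_d)\cdot\ket{i_1\cdots i_{kn}} = \prod_j t_j^{\mu_j}\ket{i_1\cdots i_{kn}}$ if and only if each index $j$ occurs exactly $\mu_j$ times among $i_1,\ldots,i_{kn}$; that is, if and only if each of $1,\ldots,k$ occurs exactly $n$ times and no index exceeding~$k$ occurs at all. The basis vector $\ket{1^{\otimes n}2^{\otimes n}\cdots k^{\otimes n}}$ has this property, and since any two words with this property are rearrangements of one another and $\pi\in\SG_{kn}$ permutes the tensor factors, every such basis vector is of the form $\pi\cdot\ket{1^{\otimes n}2^{\otimes n}\cdots k^{\otimes n}}$. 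Hence $(V^{\otimes kn})_\mu = \spa{\pi\cdot\ket{1^{\otimes n}2^{\otimes n}\cdots k^{\otimes n}} : \pi\in\SG_{kn}}$.

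Finally, since $\ket u\in M\subseteq V^{\otimes kn}$ is a weight vector of weight~$\mu$, it lies in $(V^{\otimes kn})_\mu$ and can therefore be expanded as $\ket u = \sum_{\pi\in\SG_{kn}}\alpha_\pi\,\pi\cdot\ket{1^{\otimes n}2^{\otimes n}\cdots k^{\otimes n}}$ with $\alpha_\pi\in\C$, which is the claim. I do not expect a genuine obstacle: all of the substance sits in Lemma~\ref{cor_rectweight} (which in turn relies on the Kostka-number/dominance criterion), and the only additional input is the elementary bookkeeping observation that the $\mu$-weight space of $V^{\otimes kn}$ is spanned by the $\SG_{kn}$-orbit of the single basis vector $\ket{1^{\otimes n}2^{\otimes n}\cdots k^{\otimes n}}$.
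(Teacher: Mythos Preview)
Your argument is correct and is exactly the approach the paper has in mind: the paper states the lemma as ``a direct corollary of Lemma~\ref{cor_rectweight}'' without further proof, and what you have written is precisely the unpacking of that corollary---apply Lemma~\ref{cor_rectweight} inside the concrete copy of $\Weyl\la$ in $V^{\otimes kn}$, then observe that the $\mu$-weight space of $V^{\otimes kn}$ is the span of the $\SG_{kn}$-orbit of $\ket{1^{\otimes n}2^{\otimes n}\cdots k^{\otimes n}}$.
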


We also need the following well-known fact, which expresses
that $\R^n$ is dense in $\C^n$
for the Zariski topology.

\begin{lemma}\label{lem_permanence}
Let $f:\C^n \rightarrow \C$ be a polynomial function vanishing on $\R^n$.
Then $f$ vanishes on $\C^n$.
\end{lemma}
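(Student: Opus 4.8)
The plan is to proceed by induction on the number of variables~$n$. The base case $n=1$ is classical: a univariate polynomial $f\in\C[x]$ has at most $\deg f$ zeros unless it is identically zero, so a polynomial vanishing at every point of~$\R$ must be the zero polynomial, and in particular it vanishes on all of~$\C$.

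For the inductive step, assume the statement for $n-1$ variables and let $f\colon\C^n\to\C$ be a polynomial vanishing on~$\R^n$. Write $f$ as a polynomial in its last argument with coefficients in $\C[x_1,\ldots,x_{n-1}]$, say $f=\sum_{j=0}^m g_j(x_1,\ldots,x_{n-1})\,x_n^j$. Fix any $a=(a_1,\ldots,a_{n-1})\in\R^{n-1}$. Then $t\mapsto f(a_1,\ldots,a_{n-1},t)$ is a polynomial in the single variable~$t$ which, by hypothesis, vanishes for all $t\in\R$; by the base case its coefficients all vanish, i.e.\ $g_j(a)=0$ for $j=0,\ldots,m$. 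Since $a\in\R^{n-1}$ was arbitrary, each $g_j$ vanishes on $\R^{n-1}$, so the induction hypothesis yields $g_j\equiv 0$ on $\C^{n-1}$ for every~$j$. Consequently $f$ is the zero polynomial on~$\C^n$.

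There is no serious obstacle here: the argument only uses the one-variable fact that a nonzero polynomial has finitely many roots, together with the observation that fixing the first $n-1$ coordinates at real values reduces the problem to that case. (Equivalently, one is proving that $\R^n$ is Zariski-dense in~$\C^n$, which is the statement alluded to in the formulation of the lemma.)
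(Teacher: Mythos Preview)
Your proof is correct. The paper does not actually prove this lemma: it is stated as a ``well-known fact'' expressing that $\R^n$ is Zariski-dense in $\C^n$, and no argument is supplied. Your induction on~$n$, reducing to the one-variable fact that a nonzero polynomial has only finitely many roots, is the standard elementary justification and fills the gap cleanly.
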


\section{Proof of the Theorem} %~\ref{th:main}}

Set $V := \C^d$, $T := V^{\otimes 2nk}$ and $S:=\Sym^k(\Sym^{2n}V) \subseteq T$.
Let $\lambda$ be a partition of $kn$ into at most~$k$ parts with $k\le d$.
By Lemma~\ref{lem_projection} it suffices to show
the existence of vectors $\ket \psi \in S$ and $\ket v \in \Isot {2\la} T$
such that $\braket v \psi \neq 0$.

We will now construct the vectors $\ket v$ and $\ket \psi$ before showing that their inner product is nonzero.
Let $\ket{v_\la}\in \Weyl{\la} \subseteq V^{\otimes nk}$
be the highest weight vector of weight $\la$
constructed as in (\ref{eq_explicit}).
By Lemma~\ref{crux} there exists a nonzero vector $\ket u \in \Weyl \la$
having the form~\eqref{eq:defu}.
%$$
%\ket u= \sum_{\pi \in \SG_{nk}}
%\alpha_\pi \ \pi \cdot \ket{1^{\otimes n}2^{\otimes n} \cdots k^{\otimes n}}
%$$
%for some $\alpha_\pi \in \C$.
Consider the function $f\colon\GLC d \rightarrow \C$ defined by
\begin{equation}\label{eq:fdef}
f(g) := \bra{v_\la} g \ket{u} = \sum_{\pi \in \SG_{nk}}\alpha_\pi
\bra{v_\la} g \pi \ket{1^{\otimes n}2^{\otimes n}\cdots k^{\otimes n}}.
\end{equation}
We claim that $f$ is nonzero. Indeed, since $\Weyl \la$ is irreducible, we have
$\spa{ \GLC{d} \cdot \ket {u} } = \Weyl \la $.
Hence there exist $g_i \in \GLC{d}$ and $\beta_i \in \C^\times$
such that
$$
\ket{v_\la} = \sum_i \beta_i g_i \ket{u}
$$
and therefore
$$
0\neq\braket {v_\la}{v_\la} =  \sum_{i} \beta_i \bra{v_\la} g_i \ket{u}
= \sum_i \beta_i f(g_i) .
$$
Hence $f(g_i)\ne 0$ for some~$i$.

The function~$f$ is a polynomial in the entries of $g$ and hence $f$~can be continued to
$f:\C^{d^2}\rightarrow \C$.
Lemma~\ref{lem_permanence} shows the existence of a {\em real} matrix
$g \in \R^{d\times d}$ with $f(g) \neq 0$.
By a slight perturbation of $g$ we may assume that $g\in\GLR d$.

As $f(g)\ne 0$ it follows from~\eqref{eq:fdef} that there exists $\pi \in \SG_{nk}$ such that
$$
\bra{v_\la} g \pi \ket{1^{\otimes n}2^{\otimes n}\cdots k^{\otimes n}} \neq 0.
$$
Hence the coherent state
$\ket w := \pi^{-1} g^{-1} \ket{v_\la}$
of weight~$\la$ satisfies
$$
\braket{w}{1^{\otimes n}2^{\otimes n} \cdots k^{\otimes n}} \neq 0.
$$
Moreover, $\ket w$ has only real coefficients in the computational basis,
because $\ket{v_\la}$ has only real coefficients, and $g$ is real.

Now we choose
$$
\ket \psi := \ket \phi ^{\otimes k} \in S \quad\text{ with }\quad
\ket \phi := \sum_{i=1}^d \ket i ^{\otimes 2n} \in \Sym^{2n}V.
$$
We have
$\ket \psi = \sum_{i_1=1}^d \sum_{i_2=1}^d\cdots\sum_{i_k=1}^d
\ket {i_1}^{\otimes 2n} \otimes \cdots \otimes \ket {i_k}^{\otimes 2n}$.
Let $\sigma \in \SG_{2nk}$ denote the permutation sorting
odd and even letters, more specifically,
$$
(\sigma(1),\ldots,\sigma(2nk))
= (1,3,5,\ldots, 2nk-1,2,4,6,\ldots,2nk) .
$$
For $\ket v := \sigma \ket w^{\otimes 2}$
we obtain
\begin{eqnarray}
\braket v \psi & = & \sum_{i_1=1}^d\sum_{i_2=1}^d\cdots\sum_{i_k=1}^d
\bra{w}^{\otimes 2} \sigma^{-1} \ket {i_1}^{\otimes 2n} \otimes \cdots \otimes \ket {i_k}^{\otimes 2n} \nonumber\\
& = & \sum_{i_1=1}^d\sum_{i_2=1}^d\cdots\sum_{i_k=1}^d
\Big( \braket{w}{{i_1}^{\otimes n} \cdots {i_k}^{\otimes n}} \Big)^2. \label{eqn_sumofsquares}
\end{eqnarray}
Since $\ket{w}$ has only real coefficients in the computational basis, each summand in (\ref{eqn_sumofsquares}) is a nonnegative real number.
Hence $\braket{w}{1^{\otimes n}2^{\otimes n} \cdots k^{\otimes n}} \neq 0$
ensures that $\braket v \psi > 0$.
Finally, since $\ket{w}$ is coherent, we have
$\ket w^{\otimes 2} \in \Isot{2\la}T$ by Corollary~\ref{cor_tensorsquare} and hence $\ket v \in \Isot{2\la}T$ by the invariance of $\Isot{2\la}T$ under the action of the permutation group.
\hfill$\Box$

\section*{Acknowledgements}
The authors acknowledge support of the German Science Foundation (DFG) Priority Programme 1388 on Representation Theory.
PB and CI are supported by DFG grant BU 1371/3-1. MC is supported by DFG grants CH 843/1-1 and CH 843/2-1, and by the Swiss National Science Foundation under grant PP00P2-128455.

\end{document}